\newtheoremstyle{rem}{1.3ex}{1.3ex}{\rmfamily}{}
{\itshape\rmfamily}{}{1.5ex}{}
\newtheorem{theorem}{Theorem}[section]
\theoremstyle{definition}
\newtheorem{remark}[theorem] {Remark}
\renewcommand{\section}{\secdef\sct\sect}
\newcommand{\sct}[2][default]{\refstepcounter{section}
\setcounter{equation}{0}
\vspace{0.5cm}
\centerline{ \large
\scshape \arabic{section}.\ #1}
\vspace{0.3cm}}
\newcommand{\sect}[1]{
\vspace{0.5cm}
\centerline{\large\scshape #1}
\vspace{0.3cm}}
\renewcommand{\subsection}{\secdef \subsct\sbsect}
\newcommand{\subsct}[2][default]{\refstepcounter{subsection}
\nopagebreak
\vspace{0.5\baselineskip}
{\flushleft\bf \arabic{section}.\arabic{subsection}~\bf #1  }
\nopagebreak}
\newcommand{\sbsect}[1]{\vspace{0.1cm}\noindent
{\bf #1}\vspace{0.1cm}}
\def\phi{\varphi }
\newcommand{\C}     {\mathbb{C}}
\newcommand{\R}     {\mathbb{R}}
\newcommand{\N}     {\mathbb{N}}
\renewcommand{\P}   {\mathbb{P}}
\newcommand{\E}     {\mathbb{E}}
\newcommand{\V}     {\mathbb{V}}
\def\1{{\mathchoice {1\mskip-4mu\mathrm l}
                    {1\mskip-4mu\mathrm l}
                    {1\mskip-4.5mu\mathrm l} {1\mskip-5mu\mathrm l}}}
\begin{document}

\title[Edge fluctuations for Wigner matrices]{\large Edge fluctuations of eigenvalues\\\vspace{5mm}of Wigner matrices}

\author[Hanna D\"oring and Peter Eichelsbacher]{} 
\maketitle
\thispagestyle{empty}
\vspace{0.2cm}

\centerline{\sc Hanna D\"oring\footnote{Ruhr-Universit\"at Bochum, Fakult\"at f\"ur Mathematik,
NA 4/31, D-44780 Bochum, Germany, {\tt hanna.doering@ruhr-uni-bochum.de}}, Peter Eichelsbacher\footnote{Ruhr-Universit\"at Bochum, Fakult\"at f\"ur Mathematik,
NA 3/67, D-44780 Bochum, Germany, {\tt peter.eichelsbacher@ruhr-uni-bochum.de}
\\The second author has been supported by Deutsche Forschungsgemeinschaft via SFB/TR 12.}}


\vspace{2 cm}

\begin{quote}
{\small {\bf Abstract:} }
We establish a moderate deviation principle (MDP) for the number of eigenvalues of a Wigner matrix in an interval close 
to the edge of the spectrum. Moreover we prove a MDP for the $i$th largest eigenvalue close to the edge. The proof relies on fine asymptotics of the variance of the eigenvalue counting function of GUE matrices due to Gustavsson.
The extension to large families of Wigner matrices is based on the Tao and Vu Four Moment Theorem. 
Possible extensions to other random matrix ensembles are commented.
\end{quote}

\bigskip\noindent
{\bf AMS 2000 Subject Classification:} Primary 60B20; Secondary 60F10, 15A18 

\medskip\noindent
{\bf Key words:} Large deviations, moderate deviations, Wigner random matrices, Gaussian ensembles, Four Moment Theorem


\newpage
\setcounter{section}{0}

\section{Introduction}
Recently, in \cite{Dallaporta/Vu:2011} and \cite{Dalla:2011} the Central Limit Theorem (CLT) for the eigenvalue counting function of {\it Wigner matrices}, 
that is the number of eigenvalues falling in an interval, was established. This {\it universality result} relies on fine asymptotics of the variance of the eigenvalue counting function, on the Fourth Moment Theorem due to Tao and Vu as well as on recent localization results due to Erd\"os, Yau and Yin. There are many
random matrix ensembles of interest, but to focus our discussion and to clear the exposition we shall restrict ourselves to the most famous model
class of ensembles, the Wigner Hermitian matrix ensembles.
For an integer $n \geq 1$ consider an $n \times n$ Wigner Hermitian matrix $M_n = (Z_{ij})_{1 \leq i,j \leq n}$: Consider a family of jointly independent complex-valued random variables $(Z_{ij})_{1 \leq i, j \leq n}$ with $Z_{ji} = \bar{Z}_{ij}$, in particular the $Z_{ii}$ are real valued. For $1 \leq i < j \leq n$ require that the random variables have mean zero and variance one and the $Z_{ij} \equiv Z$ are identically distributed,
and for $1 \leq i=j \leq n$ require that $Z_{ii} \equiv Z'$ are also identically distributed with mean zero and variance one. 
The distributions of $Z$ and $Z'$ are called {\it atom distributions}. An important example of a Wigner Hermitian matrix $M_n$ is the case where the entries are Gaussian, that is $Z_{ij}$ is distributed according
to a complex standard Gaussian $N(0,1)_{\C}$ for $i \not= j$ and $Z_{ii}$ is distributed according to a real standard Gaussian $N(0,1)_{\R}$, 
giving rise to the so-called Gaussian Unitary Ensembles (GUE). GUE matrices will be denoted by $M_n'$. In this case, the joint
law of the eigenvalues is known, allowing a good description of their limiting behavior both in the global and local regimes (see \cite{Zeitounibook}). In the
Gaussian case, the distribution of the matrix is invariant by the action of the group $SU(n)$. The eigenvalues of the matrix $M_n$ are independent
of the eigenvectors which are Haar distributed. If  $(Z_{i,j})_{1 \leq i <j}$ are real-valued the {\it symmetric Wigner matrix} is defined analogously and
the case of Gaussian variables with $\E Z_{ii}^2=2$ is of particular importance, since their law is invariant under the action of the orthogonal group $SO(n)$, known as Gaussian Orthogonal Ensembles (GOE).

The matrix $W_n := \frac{1}{\sqrt{n}} M_n$ is called the coarse-scale normalized Wigner Hermitian matrix, and $A_n := \sqrt{n} M_n$
is called the fine-scale normalized Wigner Hermitian matrix. 
For any $n \times n$ Hermitian matrix $A$ we denote by $\lambda_1(A), \ldots, \lambda_n(A)$ the real eigenvalues of $A$. We introduce the {\it eigenvalue counting function}
$$
N_I(A) := \big| \{ 1 \leq i \leq n : \lambda_i(A) \in I \, \} \big|
$$
for any interval $I \subset \R$. We will consider $N_I(W_n)$ as well as $N_I(A_n)$. Remark that $N_I(W_n) = N_{nI}(A_n)$.
The global {\it Wigner theorem} states that the empirical measure
$\frac 1n \sum_{i=1}^n \delta_{\lambda_i}$
on the eigenvalues of the coarse-scale normalized Wigner Hermitian matrix $W_n$ converges weakly almost surely as $n \to \infty$ to the semicircle law
$$
d \varrho_{sc}(x) = \frac{1}{2 \pi} \sqrt{ 4 - x^2} \, 1_{[-2,2]}(x) \, dx,
$$ 
(see \cite[Theorem 2.1.21, Theorem 2.2.1]{Zeitounibook}). Consequently, for any interval $I \subset \R$,
$$
\lim_{n \to \infty} \frac{1}{n} N_I(W_n) = \varrho_{sc}(I) := \int_I \varrho_{sc}(y) \, dy
$$
almost surely. At the fluctuation level, it is well known that for the GUE, $W_n' := \frac{1}{\sqrt{n}} M_n'$
satisfies a CLT (see \cite{Soshnikov:2000}): Let $I_n$ be an interval in $\R$. If $\V(N_{I_n}(W_n')) \to \infty$
as $n \to \infty$, then
$$
\frac{N_{I_n}(W_n') - \E [ N_{I_n}(W_n')]}{\sqrt{\V (N_{I_n}(W_n'))}} \to N(0,1)_{\R}
$$
as $n \to \infty$ in distribution.
In \cite{Gustavsson:2005} the asymptotic behavior of the expectation and the variance of the counting function $N_I(W_n')$ for intervals $I=[y, \infty)$ with
$y \in (-2,2)$ strictly in the bulk of the semicircle law was established:
\begin{equation} \label{asymp}
\E [ N_{I}(W_n')] = n \varrho_{sc}(I) + O \bigl( \frac{\log n}{n} \bigr) \,\, \text{and} \,\, \V (N_{I}(W_n')) = \bigl( \frac{1}{2 \pi^2} + o(1) \bigr) \, \log n.
\end{equation}
The proof applied strong asymptotics for orthogonal polynomials with respect to exponential weights, see \cite{Deift/Thomas:1999}.
In particular the CLT holds for $N_I(W_n')$ if $I=[y, \infty)$ with $y \in (-2,2)$, and moreover in this case one obtains with \eqref{asymp}
$$
\frac{N_{I}(W_n') - n \varrho_{sc}(I)}{\sqrt{\frac{1}{2 \pi^2} \log n}} \to N(0,1)_{\R}
$$
as $n \to \infty$ (called the CLT with numerics). These conclusions were extended to non-Gaussian Wigner Hermitian matrices in \cite{Dallaporta/Vu:2011}.
\bigskip

\section{Global moderate deviations at the edge of the spectrum}

Certain deviations results and concentration properties for Wigner matrices were considered. Our aim is to establish moderate
deviation principles. Recall that a sequence of laws $(P_n)_{n \geq 0}$ on a Polish space $\Sigma$ satisfies a large deviation principle (LDP)
with good rate function $I : \Sigma \to \R_+$ and speed $s_n$ going to infinity with $n$ if and only if the level sets $\{x: I(x) \leq M\}$, $0 \leq M < \infty$,
of $I$ are compact and for all closed sets $F$
$$
\limsup_{n \to \infty} s_n^{-1} \log P_n(F) \leq - \inf_{x \in F} I(x)
$$
whereas for all open sets $O$
$$
\liminf_{n \to \infty} s_n^{-1} \log P_n(O) \geq - \inf_{x \in O} I(x).
$$
We say that a sequence of random variables satisfies the LDP when the sequence of measures induced by these variables satisfies the LDP. Formally
a moderate deviation principle is nothing else but the LDP. However, we speak about a moderate deviation principle (MDP) for a sequence of random variables,
whenever the scaling of the corresponding random variables is between that of an ordinary Law of Large Numbers (LLN) and that of a CLT.

Large deviation results for the empirical measures of Wigner matrices are still only known for the Gaussian ensembles since their
proof is based on the explicit joint law of the eigenvalues, see \cite{BenArous/Guionnet:1997} and \cite{Zeitounibook}. A moderate deviation
principle for the empirical measure of the GUE or GOE is also known, see  \cite{Dembo/Guionnet/Zeitouni:2003}. This moderate deviations result
does not have yet a fully universal version for Wigner matrices. It has been generalised to Gaussian divisible matrices with a deterministic self-adjoint matrix added with converging empirical measure \cite{Dembo/Guionnet/Zeitouni:2003} and to Bernoulli matrices \cite{DoeringEichelsbacher:2009}.
Recently we proved in \cite{DoeringEichelsbacher:2011} a MDP for the number of eigenvalues of a GUE matrix in an interval. 
If $M_n'$ is a GUE matrix and $W_n' := \frac{1}{\sqrt n} M_n'$ and $I_n$ be an interval in $\R$. If $\V(N_{I_n}(W_n')) \to \infty$
for $n \to \infty$, then, for any sequence $(a_n)_n$ of real numbers such that
$
1 \ll a_n \ll \sqrt{\V (N_{I_n}(W_n'))}
$, 
the sequence $(Z_n)_n$ with
$$
Z_n = \frac{N_{I_n}(W_n') - \E [ N_{I_n}(W_n')]}{a_n \, \sqrt{\V (N_{I_n}(W_n'))}}
$$
satisfies a MDP with speed $a_n^2$ and rate function $I(x)=\frac{x^2}{2}$.
Moreover let $I=[y, \infty)$ with $y \in (-2,2)$ strictly in the bulk, then the sequence $(\hat{Z}_n)_n$ with
$
\hat{Z}_n = \frac{N_{I}(W_n') - n \varrho_{sc}(I)}{a_n \, \sqrt{\frac{1}{2 \pi^2} \log n}}
$
satisfies the MDP with the same speed, the same rate function, and in the regime $1 \ll a_n \ll \sqrt{\log n}$ (called the MDP with numerics; see Theorem 1.1
in \cite{DoeringEichelsbacher:2011}). It follows applying \eqref{asymp}.
In  \cite{DoeringEichelsbacher:2011}, these conclusions were extended to non-Gaussian Wigner Hermitian matrices. 

The first observation in this paper is, that the MDP for $(Z_n)_n$ and $(\hat{Z}_n)_n$, respectively, is not restricted to the bulk of the spectrum.
To state the result, let $\delta >0$ and assume that $y_n \in [-2 + \delta, 2)$ and $n(2 - y_n)^{3/2} \to \infty$ when $n \to \infty$. Then with
\cite[Lemma 2.3]{Gustavsson:2005} the variance of the number of eigenvalues of $W_n'$ in $I_n :=[y_n, \infty)$  satisfies
\begin{equation} \label{edgeasy1}
\V (N_{I_n}(W_n')) = \frac{1}{2 \pi^2} \, \log \bigl(  n(2 - y_n)^{3/2} \bigr) \, (1 + \eta(n)),
\end{equation}
where $\eta(n) \to 0$ as $n \to \infty$. Moreover the expected number of eigenvalues of $W_n'$ in $I_n$, when $y_n \to 2^-$, is given by \cite[Lemma 2.2]{Gustavsson:2005}:
\begin{equation} \label{edgeasy2}
\E (N_{I_n}(W_n')) = \frac{2}{3 \pi} n (2 - y_n)^{3/2} + O(1).
\end{equation}
Hence applying Theorem 1.1 in \cite{DoeringEichelsbacher:2011} we immediately obtain:

\begin{theorem} \label{result1}
Let $M_n'$ be a GUE matrix and $W_n' = \frac{1}{\sqrt{n}} M_n$. Let $I_n = [y_n, \infty)$ where $y_n \to 2^-$ for $n \to \infty$. Assume that $y_n \in[-2 + \delta, 2)$ and $n(2 -y_n)^{3/2} \to \infty$ when $n \to \infty$. Then, for any sequence $(a_n)_n$ of real numbers such that
$1 \ll a_n \ll \sqrt{\V (N_{I_n}(W_n'))}$, the sequence
$\frac{N_{I_n}(W_n') - \E [ N_{I_n}(W_n')]}{a_n \, \sqrt{\V (N_{I_n}(W_n'))}}$
satisfies a MDP with speed $a_n^2$ and rate function $I(x)=\frac{x^2}{2}$.
Moreover the sequence
$$
Z_n := \frac{N_{I_n}(W_n') - \frac{2}{3 \pi} n(2 -y_n)^{3/2}}{a_n \, \sqrt{\frac{1}{2 \pi^2} \log (n(2-y_n)^{3/2})}}
$$
satisfies the MDP with the same speed, the same rate function, and in the regime $1 \ll a_n \ll \sqrt{\log (n(2-y_n)^{3/2})}$ (called the MDP with numerics).
\end{theorem}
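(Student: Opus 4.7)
The plan is to deduce Theorem~\ref{result1} as an immediate consequence of Theorem 1.1 in \cite{DoeringEichelsbacher:2011}, combined with the edge asymptotics \eqref{edgeasy1} for the variance and \eqref{edgeasy2} for the expectation.

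The first assertion is essentially hypothesis-checking. Under $n(2-y_n)^{3/2}\to\infty$ and $y_n\in[-2+\delta,2)$, formula \eqref{edgeasy1} gives $\V(N_{I_n}(W_n'))\to\infty$. The MDP in Theorem 1.1 of \cite{DoeringEichelsbacher:2011} is proved for an arbitrary sequence of intervals $I_n$ satisfying this single divergence condition (bulk or edge playing no role in its proof), and it delivers directly the MDP with speed $a_n^2$ and rate $x^2/2$ for the sequence centered by its expectation and normalized by $a_n\sqrt{\V(N_{I_n}(W_n'))}$, provided $1\ll a_n\ll\sqrt{\V(N_{I_n}(W_n'))}$.

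For the \emph{MDP with numerics}, the strategy is to compare $Z_n$ with the sequence
\[
\tilde Z_n := \frac{N_{I_n}(W_n') - \E[ N_{I_n}(W_n')]}{a_n \sqrt{\V(N_{I_n}(W_n'))}},
\]
for which the MDP has just been established, and to verify that the two sequences are exponentially equivalent at speed $a_n^2$. Writing $v_n := \V(N_{I_n}(W_n'))$ and $\tilde v_n := \frac{1}{2\pi^2}\log(n(2-y_n)^{3/2})$, one has the pointwise identity
\[
Z_n \;=\; \sqrt{v_n/\tilde v_n}\;\tilde Z_n \;+\; d_n,\qquad d_n := \frac{\E[N_{I_n}(W_n')] - \tfrac{2}{3\pi}n(2-y_n)^{3/2}}{a_n\sqrt{\tilde v_n}}.
\]
By \eqref{edgeasy1}, $\sqrt{v_n/\tilde v_n}=\sqrt{1+\eta(n)}\to 1$, and by \eqref{edgeasy2} the numerator of $d_n$ is $O(1)$, while $a_n\sqrt{\tilde v_n}\to\infty$ since $a_n\to\infty$ and $\log(n(2-y_n)^{3/2})\to\infty$; thus $d_n\to 0$ deterministically. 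Since the MDP for $\tilde Z_n$ has a good rate function, $\tilde Z_n$ is exponentially tight at speed $a_n^2$, which implies that both $d_n$ and the multiplicative perturbation $\bigl(\sqrt{v_n/\tilde v_n}-1\bigr)\tilde Z_n$ are exponentially negligible at that speed; the standard transfer lemma for exponentially equivalent sequences then yields the MDP for $Z_n$ with the same speed and rate. The regime $1\ll a_n\ll\sqrt{\log(n(2-y_n)^{3/2})}$ is the translation, via \eqref{edgeasy1}, of the constraint $1\ll a_n\ll\sqrt{v_n}$ required in the first statement.

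The argument presents no real obstacle: once Theorem 1.1 of \cite{DoeringEichelsbacher:2011} is invoked, only a plug-in of \eqref{edgeasy1}--\eqref{edgeasy2} and a routine exponential-equivalence step remain. The only external input is Gustavsson's edge asymptotics \eqref{edgeasy1}--\eqref{edgeasy2}, which are already on the table.
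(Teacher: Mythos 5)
Your proposal is correct and follows the paper's own route: the authors simply note that Theorem~\ref{result1} follows ``immediately'' by combining Theorem~1.1 of \cite{DoeringEichelsbacher:2011} with Gustavsson's edge asymptotics \eqref{edgeasy1}--\eqref{edgeasy2}, and your argument just spells out the routine exponential-equivalence step (deterministic recentering and rescaling absorbed by exponential tightness) that the paper leaves implicit.
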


For symmetry reasons an analogous result could be formulated for the counting function $N_{I_n}(W_n')$ near the left edge of the spectrum.
\bigskip

\section{Local moderate deviations at the edge of the spectrum}

Under certain conditions on $i$ it was proved in \cite{Gustavsson:2005} that the $i$-th eigenvalue $\lambda_i$ of the GUE $W_n'$ satisfies a CLT.
Consider $t(x) \in [-2,2]$ defined for $x \in [0,1]$ by
$$
x = \int_{-2}^{t(x)} \varrho_{sc}(t) \, dt = \frac{1}{2 \pi} \int_{-2}^{t(x)} \sqrt{4 - x^2} \, dx.
$$
Then for $i=i(n)$ such that $i/n \to a \in (0,2)$ as $n \to \infty$ (i.e. $\lambda_i$ is eigenvalue in the bulk), $\lambda_i(W_n')$ satisfies a CLT:
\begin{equation} \label{CLT-Gu}
X_n := \sqrt{\frac{4 - t(i/n)^2}{2}} \frac{\lambda_i(W_n') - t(i/n)}{\frac{\sqrt{\log n}}{n}} \to N(0,1)
\end{equation}
for $n \to \infty$. Remark that $t(i/n)$ is sometimes called the {\it classical or expected location} of the $i$-th eigenvalue. The standard deviation
is $\frac{\sqrt{\log n}}{\pi \sqrt{2}} \, \frac{1}{n \varrho_{sc}(t(i/n))}$. Note that from the semicircular law, the factor $\frac{1}{n \varrho_{sc}(t(i/n))}$ is the mean
eigenvalue spacing. Informally, \eqref{CLT-Gu} asserts in the GUE case, that each eigenvalue $\lambda_i(W_n')$ typically deviates by $O \bigl( \sqrt{\log n}/ (n \varrho(t(i/n))) \bigr)$ around its classical location. This result can be compared with the so called {\it eigenvalue rigidity property} $\lambda_i(W_n') = t(i/n) +
O(n^{-1 + \varepsilon})$ established in \cite{Erdoes/Yau/Yin:2010}, which has a slightly worse bound on the deviation but which holds with overwhelming probability and for
general Wigner ensembles. See  also discussions in \cite[Section 3]{TaoVu:2012}. We proved in \cite[Theorem 4.1]{DoeringEichelsbacher:2011} 
a MDP for $\bigl(\frac{1}{a_n} X_n\bigr)_n$ with $X_n$ in \eqref{CLT-Gu}, for any $1 \ll a_n \ll \sqrt{\log n}$, with speed $a_n^2$ and rate $x^2/2$. Moreover in
\cite[Theorem 4.2]{DoeringEichelsbacher:2011}, these conclusions were extended to non-Gaussian Wigner Hermitian matrices.
The proofs are achieved by the tight relation between eigenvalues and the counting function
expressed by the elementary equivalence, for $I(y)=[y, \infty)$, $y \in \R$,
\begin{equation} \label{relation}
N_{I(y)}(W_n) \leq n-i \,\, \text{if and only if} \,\, \lambda_i(W_n) \leq y.
\end{equation}
This relation is true for any eigenvalue $\lambda_i(W_n)$, independent of sitting being in the bulk of the spectrum or very close to the right edge
of the spectrum. Hence the next goal is to transport the MDP for the counting function of eigenvalues close to the (right) edge, Theorem \ref{result1}, to a MDP for any singular eigenvalue close to the right edge of the spectrum. Consider
$i=i(n)$ with $i \to \infty$ but $i/n \to 0$ as $n \to \infty$ and define $\lambda_{n-i}(W_n')$ as eigenvalue number $n-i$ in the GUE.  
An example is $i(n) = n - \log n$.
In \cite[Theorem 1.2]{Gustavsson:2005} a CLT was proven, which is
\begin{equation} \label{localrv}
Z_{n,i} := \frac{\lambda_{n-i}(W_n') - \bigl( 2 - \bigl( \frac{3 \pi}{2} \frac in \bigr)^{2/3} \bigr)}{ \operatorname{const} \bigl( \frac{ \log i}{i^{2/3} n^{4/3}} \bigr)^{1/2}} \to N(0,1)_{\R}
\end{equation}
in distribution with $\operatorname{const}= \bigl( (3 \pi)^{2/3} 2^{1/3} \bigr)^{-1/2}$. Remark that the formulation in \cite[Theorem 1.2]{Gustavsson:2005}
is different, since first of all the GUE in \cite{Gustavsson:2005} was defined such that the limiting semicircular law has support $[-1,1]$ and, second
the CLT in \cite{Gustavsson:2005} is formulated for $\lambda_{n-i}(M_n')$ instead of $\lambda_{n-i}(W_n')$. The choice of the asymptotic expectation
and variance in \eqref{localrv} can be explained as follows. Let $g(y_n)$ be the expected number of eigenvalues in $I_n = [y_n, \infty)$. Then with \eqref{relation} 
$$
P \bigl( \lambda_{n-i}(W_n') \leq y_n \bigr) = P \bigl( N_{I_n}(W_n') \leq i \bigr) = P \biggl( \frac{N_{I_n}(W_n') - g(y_n)}{ \V(N_{I_n}(W_n'))^{1/2}} \leq \frac{i -g(y_n)}{\V(N_{I_n}(W_n'))^{1/2}} \biggr).
$$
Trying to apply the CLT for $N_{I_n}$ is choosing $y_n$ such that $\frac{i -g(y_n)}{\V(N_{I_n}(W_n'))^{1/2}} \to x$ for $n \to \infty$, because this
would imply $P \bigl( \lambda_{n-i}(W_n') \leq y_n \bigr) \to \int_{-\infty}^x \varphi_{0,1}(t) \, dt$, where $\varphi_{0,1}(\cdot)$ denotes the density
of the standard normal distribution. The candidate for $y_n$ can be found as in the proof of \cite[Theorem 1.2]{Gustavsson:2005}, with $g(y_n) = \frac{2}{3 \pi}
n(2 -y_n)^{3/2} + O(1)$ and $h(y_n) = \V(N_{I_n}(W_n'))^{1/2} = \frac{1}{\sqrt{2} \pi} \log^{1/2} (n(2-y_n)^{3/2}) + o(\log^{1/2} (n(2-y_n)^{3/2}))$.
Applying the same heuristic as on page 157 in \cite{Gustavsson:2005}, we obtain
$$
y_n \approx  2 - \biggl( \frac{3 \pi}{2} \frac in \biggr)^{2/3} + x \,\,    \biggl( \bigl((3 \pi)^{2/3} 2^{1/3} \bigr)^{-1/2} \, \bigl( \frac{ \log i}{i^{2/3} n^{4/3}} \bigr) \biggr)^{1/2}.
$$
With respect to the statement  in Theorem \ref{result1} one might expect a MDP for $\bigl( \frac{1}{a_n} Z_{n,i} \bigr)_n$ for certain growing
sequences $(a_n)_n$. We have
\begin{eqnarray*}
P \bigl( Z_{n,i}/ a_n \leq x \bigr) & = & P \bigl( \lambda_{n-i}(W_n') \leq y_n(a_n) \bigr) = P \bigl( N_{I_n}(W_n') \leq i \bigr) \\
& = & P \biggl( \frac{N_{I_n}(W_n') - \E(N_{I_n}(W_n'))}{ a_n \V(N_{I_n}(W_n'))^{1/2}} 
\leq \frac{i -\E(N_{I_n}(W_n'))}{a_n \V(N_{I_n}(W_n'))^{1/2}} \biggr)
\end{eqnarray*}
with 
\begin{equation} \label{ynan}
y_n(a_n) := 2 - \biggl( \frac{3 \pi}{2} \frac in \biggr)^{2/3} + a _n \, x \,\,    \biggl( \bigl((3 \pi)^{2/3} 2^{1/3} \bigr)^{-1/2} \, \bigl( \frac{ \log i}{i^{2/3} n^{4/3}} \bigr)\biggr)^{1/2}
\end{equation}
and $I_n =[y_n(a_n), \infty)$. 
Since $i \to \infty$ and $i/n \to 0$ for $n \to \infty$, we have that $y_n(a_n) \to 2^-$ for every $a_n$ such that $a_n \ll \bigl( \log i\bigr)^{1/2}$.
Hence we can apply \eqref{edgeasy2}, that is $\E (N_{I_n}(W_n')) = \frac{2}{3 \pi} n (2 - y_n(a_n))^{3/2} + O(1)$.
With 
$$
2 - y_n(a_n) =  \biggl( \frac{3 \pi}{2} \frac in \biggr)^{2/3} \biggl( 1 - \frac{a_n \, x \, \log^{1/2} i}{(3 \pi / \sqrt{2}) i} \biggr)
$$
by Taylor's expansion we obtain
\begin{equation} \label{tay1}
\frac{2}{3 \pi} n (2 - y_n(a_n))^{3/2} =  i - \frac{1}{\sqrt{2} \pi} a_n \, x \, \log^{1/2} i + o \bigl( a_n \, x \, \log^{1/2} i \bigl),
\end{equation}
and therefore
$i -\E(N_{I_n}(W_n'))= \frac{1}{\sqrt{2} \pi} a_n \, x \, \log^{1/2} i + o \bigl( a_n \, x \, \log^{1/2} i \bigl) + O(1)$.
From \eqref{tay1} we obtain that $n(2 - y_n(a_n))^{3/2} \to \infty$ for $n \to \infty$ for every $a_n \ll \bigl( \log i \bigr)^{1/2}$.
Hence we can apply \eqref{edgeasy1}, that is $\V (N_{I_n}(W_n')) = \frac{1}{2 \pi^2} \, \log \bigl(  n(2 - y_n(a_n))^{3/2} \bigr) \, (1 + o(1))$.
With \eqref{tay1} we get
$$
\V (N_{I_n}(W_n')) =  \biggl( \frac{1}{2 \pi^2} \log \bigl( \frac{3 \pi}{2} i \bigr) + \frac{1}{2 \pi^2} \log \biggl( 1 - \frac{a_n \, x \, (\log i)^{1/2}}{\sqrt{2} \pi i} + 
o \bigl( \frac{a_n \, x \, (\log i)^{1/2}}{i} \bigl) \biggr) \biggr) (1 + o(1)).
$$
Summarizing we have proven that for any growing sequence $(a_n)_n$ of real numbers such that $1 \ll a_n \ll (\log i)^{1/2}$
$$
\frac{i -\E(N_{I_n}(W_n'))}{a_n \V(N_{I_n}(W_n'))^{1/2}} = x + o(1).
$$
By Theorem \ref{result1} we obtain for every $x < 0$ that
$
\lim_{n \to \infty} \frac{1}{a_n^2} \log P \bigl( Z_{n,i}/ a_n \leq x \bigr)  = - \frac{x^2}{2}$.
With $P \bigl(  Z_{n,i}/ a_n \geq x \bigr) = P\bigl( N_{I_n}(W_n') \geq i-1 \bigr)$
the same calculations lead, for every $x >0$, to
\begin{equation} \label{soso}
\lim_{n \to \infty} \frac{1}{a_n^2} \log P \bigl( Z_{n,i}/ a_n \geq x \bigr)  = - \frac{x^2}{2}.
\end{equation}
Next we choose all open intervals $(a,b)$, where at least one of the endpoints is finite and where none of the endpoints is zero. Denote
the family of such intervals by ${\mathcal U}$. Now it follows for each $U=(a,b) \in {\mathcal U}$,
$$
{\mathcal L}_{U}: = \lim_{n \to \infty} \frac{1}{a_n^2} \log P \bigl(  Z_{n,i}/ a_n \in U \bigr) = \left\{ \begin{array}{r@{\quad:\quad}l}
b^2/2 & a < b < 0 \\ 0 & a < 0 < b \\ a^2/2 & 0<a<b \end{array} \right. 
$$
By \cite[Theorem 4.1.11]{Dembo/Zeitouni:LargeDeviations}, $(Z_{n,i}/ a_n)_n$ satisfies a weak MDP (see definition in \cite[Section 1.2]{Dembo/Zeitouni:LargeDeviations}) with speed $a_n^2$ and rate function 
$t \mapsto \sup_{U \in {\mathcal U}; t \in U} {\mathcal L}_U = \frac{t^2}{2}$.
With \eqref{soso}, it follows that $(Z_{n,i}/ a_n)_n$ is exponentially tight (see definition in \cite[Section 1.2]{Dembo/Zeitouni:LargeDeviations}), hence by Lemma 1.2.18 in \cite{Dembo/Zeitouni:LargeDeviations}, 
$(Z_{n,i}/ a_n)_n$ satisfies the MDP with the same speed and the same good rate function. Hence we have proven:

\begin{theorem} \label{result2}
Let $M_n'$ be a GUE matrix and $W_n' = \frac{1}{\sqrt{n}} M_n'$. Consider $i=i(n)$ such that $i \to \infty$ but $i/n \to 0$ as $n \to \infty$.
If $\lambda_{n-i}$ denotes the eigenvalue number $n-i$ in the GUE matrix $W_n'$ it holds that for any sequence $(a_n)_n$ of real numbers
such that  $1 \ll a_n \ll (\log i)^{1/2}$, the sequence $\bigl( \frac{1}{a_n} Z_{n,i} \bigr)_n$ with $Z_{n,i}$ given by \eqref{localrv}
satisfies a MDP with speed $a_n^2$ and rate function $I(x) = \frac{x^2}{2}$.
\end{theorem}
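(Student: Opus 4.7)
My plan is to transfer the global MDP for the counting function (Theorem \ref{result1}) to the local statement via the elementary duality \eqref{relation}. For a fixed $x \in \R \setminus \{0\}$ and a sequence $1 \ll a_n \ll (\log i)^{1/2}$, I define $y_n(a_n)$ according to \eqref{ynan} and set $I_n := [y_n(a_n), \infty)$. The key identity
\[
P\bigl( Z_{n,i}/a_n \leq x \bigr) = P\bigl( \lambda_{n-i}(W_n') \leq y_n(a_n) \bigr) = P\bigl( N_{I_n}(W_n') \leq i \bigr)
\]
reduces the problem to an MDP estimate for the counting function on a well-chosen shrinking interval near the edge.

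The first analytic step is to check that in the chosen regime one has $y_n(a_n) \to 2^-$ and $n(2-y_n(a_n))^{3/2} \to \infty$, so that Gustavsson's asymptotics \eqref{edgeasy1} and \eqref{edgeasy2} apply on $I_n$. Expanding $(2-y_n(a_n))^{3/2}$ to first order around $(3\pi i/(2n))^{2/3}$ as in \eqref{tay1}, I would verify
\[
\frac{i - \E[N_{I_n}(W_n')]}{a_n\,\sqrt{\V(N_{I_n}(W_n'))}} = x + o(1).
\]
Substituting into Theorem \ref{result1} then gives, for every $x<0$,
\[
\lim_{n \to \infty} \frac{1}{a_n^2} \log P\bigl( Z_{n,i}/a_n \leq x \bigr) = -\frac{x^2}{2},
\]
and the symmetric identity $\{Z_{n,i}/a_n \geq x\} = \{N_{I_n}(W_n') \geq i-1\}$ delivers the analogous upper tail for $x>0$.

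To finish, I would invoke \cite[Theorem 4.1.11]{Dembo/Zeitouni:LargeDeviations} applied to the base $\mathcal U$ of open intervals with at least one finite endpoint, none equal to zero. The point estimates above compute $\mathcal L_U$ for every $U \in \mathcal U$ and yield a weak MDP with speed $a_n^2$ and rate function $t^2/2$. Exponential tightness follows immediately from the two-sided tail bounds applied at a diverging level, after which \cite[Lemma 1.2.18]{Dembo/Zeitouni:LargeDeviations} upgrades the weak MDP to a full MDP with the same good rate function.

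The main obstacle is the Taylor/asymptotic analysis in the second step: one must control the remainders in both $(2-y_n(a_n))^{3/2}$ and $\log(n(2-y_n(a_n))^{3/2})$ so that the centered discrepancy converges to $x$ with error $o(1)$ throughout the full range $1 \ll a_n \ll (\log i)^{1/2}$. At the cutoff $a_n \sim (\log i)^{1/2}$ the perturbation would become comparable to the leading term and Theorem \ref{result1} would cease to apply, which is precisely what pins down the admissible range of $a_n$. Everything else is bookkeeping and standard large-deviation machinery.
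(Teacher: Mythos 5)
Your proposal follows the paper's own argument essentially step for step: the duality \eqref{relation}, the choice of $y_n(a_n)$ as in \eqref{ynan}, the verification via \eqref{edgeasy1}--\eqref{edgeasy2} and Taylor expansion that the centered discrepancy tends to $x$, the application of Theorem \ref{result1} to both tails, and finally the upgrade from a weak MDP to a full MDP via the interval base $\mathcal U$, \cite[Theorem 4.1.11]{Dembo/Zeitouni:LargeDeviations}, exponential tightness, and \cite[Lemma 1.2.18]{Dembo/Zeitouni:LargeDeviations}. The proposal is correct and matches the paper's proof.
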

\bigskip

\section{Universal local moderate deviations near the edge}

Our next goal is to check whether the precise distribution of the atom variables $Z_{ij}$ of a Hermitian random matrix $M_n$ are
relevant for the conclusion of the MDP stated in Theorems \ref{result1} and \ref{result2}, so long as they are normalized to have mean zero and variance one,
and are jointly independent on the upper-triangular portion of $M_n$. It is a remarkable feature of our MDP results that they are {\it universal}, hence
the distribution of the atom variables are irrelevant in some sense. The arguments used above relied heavily on the special structure of the GUE ensemble,
in particular on the determinantal structure of the joint probability distribution (see \cite[Theorem 1.1 and 1.3]{DoeringEichelsbacher:2011}) and
on the fine asymptotics of the expectation and the variance of the eigenvalue counting function of GUE presented in \cite{Gustavsson:2005}. We apply
the swapping method due to Tao and Vu, in which one replaces the entries of one Wigner Hermitian matrix $M_n$ with another matrix $M_n'$ which are close
in the sense of matching moments.  This goes back to Lindeberg's exchange strategy for proving the classical CLT, \cite{Lindeberg}, first applied to Wigner matrices in \cite{Chatterjee:2006}. The precise statement of the so called Four Moment Theorem needs some
preparation. We will use the notation as in \cite{TaoVu:2012}.

We say that two complex random variables $\eta_1$ and $\eta_2$ {\it match to order $k$} with $k \in \N$ if
$$
\E \bigl[ \text{Re}(\eta_1)^m \, \text{Im}(\eta_1)^l \bigr] = \E \bigl[ \text{Re}(\eta_2)^m \, \text{Im}(\eta_2)^l \bigr]
$$
for all $m,l \geq 0$ such that $m+l \leq k$. We will consider the case when the real and the imaginary parts of  $\eta_1$ or of $\eta_2$
are independent, then the matching moment condition simplifies to the assertion that $E \bigl[ \text{Re}(\eta_1)^m] = E \bigl[ \text{Re}(\eta_2)^m]$
and $E \bigl[ \text{Im}(\eta_1)^l] = E \bigl[ \text{Im}(\eta_2)^l]$ for all $m,l \geq 0$ such that $m+l \leq k$.

We say that the Wigner Hermitian matrix $M_n$ 
obeys Condition ${\bf(C0)}$ if we have the exponential decay condition
$$
P \bigl( |Z_{ij}| \geq t^C \bigr) \leq e^{-t}
$$
for all $1 \leq i,j \leq n$ and $t \geq C'$, and some constants $C, C'$ independent of $i,j,n$. We say that the Wigner Hermitian matrix $M_n$ 
obeys Condition ${\bf(C1)}$ with constant $C_0$ if one has
$$
\E |Z_{ij}|^{C_0} \leq C
$$
for some constant $C$ independent of $n$. Of course, Condition ${\bf(C0)}$ implies Condition ${\bf(C1)}$ for any $C_0$, but not conversely.
The statement of the Four Moment Theorem for eigenvalues is:

\begin{theorem}[Four Moment Theorem due to Tao and Vu] \label{taovu}
Let $c_0>0$ be a sufficiently small  constant. Let $M_n=(Z_{ij})$ and $M_n'=(Z_{ij}')$ be two $n \times n$ Wigner Hermitian
matrices satisfying Condition ${\bf(C1)}$ for some sufficiently large constant $C_0$. Assume furthermore that for any $1 \leq i <j \leq n$, $Z_{ij}$ and $Z_{ij}'$ match to order 4 and for any $1 \leq i \leq n$, and  $Z_{ii}$
and $Z_{ii}'$ match to order 2. Set $A_n :=\sqrt{n} M_n$ and $A_n' := \sqrt{n} M_n'$, let $1 \leq k \leq n^{c_0}$ be an integer, 
and let $G : {\Bbb R}^k \to {\Bbb R}$ be a smooth function obeying the derivative bounds 
$|\nabla^jG(x)| \leq n^{c_0}$ for all $0 \leq j \leq 5$ and $x \in {\Bbb R}^k$. Then for any $1 \leq i_1 < i_2 \cdots < i_k \leq n$, and
for $n$ sufficiently large we have
\begin{equation} \label{taovu}
|\E \bigl( G(\lambda_{i_1}(A_n), \ldots, \lambda_{i_k}(A_n) ) \bigr) - \E \bigl( G(\lambda_{i_1}(A_n'), \ldots, \lambda_{i_k}(A_n') ) \bigr)| \leq n^{-c_0}.
\end{equation}
\end{theorem}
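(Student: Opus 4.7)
My plan is to follow the Lindeberg exchange strategy adapted to eigenvalues, in the spirit of Chatterjee's approach to Wigner matrix CLTs. I would enumerate the independent entries of the upper triangle of $M_n$ as $\zeta_1,\dots,\zeta_N$ with $N = n(n+1)/2$, and form a sequence of hybrid matrices $M_n^{(0)} = M_n, M_n^{(1)}, \dots, M_n^{(N)} = M_n'$ where $M_n^{(\ell)}$ agrees with $M_n'$ in its first $\ell$ entries and with $M_n$ in the rest. Writing the difference
\[
\E G(\lambda_{i_1}(A_n),\dots,\lambda_{i_k}(A_n)) - \E G(\lambda_{i_1}(A_n'),\dots,\lambda_{i_k}(A_n'))
\]
as a telescoping sum of $N$ single-swap differences, it suffices to bound each term by $n^{-c_0}/N$ (with a loss $n^{O(c_0)}$ absorbable by shrinking $c_0$).

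For the swap at a position $(i,j)$, I would freeze all other entries and view the matrix as $M + \frac{\zeta}{\sqrt n}V_{ij}$, where $V_{ij}$ is the Hermitian perturbation supported at $(i,j)$ and $(j,i)$. Setting $F(\zeta) := G(\lambda_{i_1}(\sqrt n M + \zeta V_{ij}),\dots,\lambda_{i_k}(\sqrt n M + \zeta V_{ij}))$, I would Taylor-expand $F$ to order five in the real and imaginary parts of $\zeta$. For $i<j$, the hypothesis that $\zeta$ and $\zeta'$ match to order four kills the expected contributions of the zeroth through fourth Taylor terms, leaving only a fifth-order remainder. For the $n$ diagonal entries, matching to order two suffices because the perturbation is rank one rather than rank two and there are only $n$ of them.

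The technical core is to control the derivatives $\partial^j F/\partial \zeta^j$ for $j \leq 5$ by $n^{O(c_0)}$ with overwhelming probability. I would use the Hadamard-type formulae expressing derivatives of a simple eigenvalue $\lambda_m$ in terms of eigenvector overlaps with $V_{ij}$ and of the resolvent $(\sqrt n M - \lambda_m)^{-1}$ restricted to the orthogonal complement of the eigenvector, then invoke eigenvector delocalization and level repulsion under Condition~${\bf(C1)}$ with $C_0$ large, on an event of probability $1 - O(n^{-A})$. Combined with the hypothesis $|\nabla^j G| \leq n^{c_0}$, this yields a fifth-order remainder of size $n^{O(c_0)}\, n^{-5/2}$ per swap; multiplying by $N = O(n^2)$ and choosing $c_0$ sufficiently small gives the required $n^{-c_0}$ global error. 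The contribution from the small-probability exceptional event is handled by a standard truncation using Condition~${\bf(C1)}$, which bounds $|\zeta|$ by a polynomial in $n$.

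The hard part will be the level-repulsion and delocalization estimates, which must hold with failure probability $O(n^{-A})$ for arbitrarily large $A$ and uniformly in the indices $i_1 < \dots < i_k$; these are precisely what dictates the choice of the sufficiently large constant $C_0$ in ${\bf(C1)}$. A secondary subtlety is the possible presence of eigenvalue crossings when $\zeta$ varies, which forces one to work with $F$ as a function of the ordered spectrum rather than of individual analytic branches, and to keep the perturbation $\zeta V_{ij}/\sqrt n$ inside a regime where the relevant eigenvalues stay well separated from their neighbours; this is where the local semicircle law machinery is indispensable.
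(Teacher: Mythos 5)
The paper does not prove this theorem: Theorem~\ref{taovu} is stated as a citation, with the authors noting that the bulk version under Condition ${\bf(C0)}$ was established in \cite{Tao/Vu:2009} and extended to all indices under the weaker Condition ${\bf(C1)}$ in \cite{Tao/Vu:2010}. There is therefore no in-paper proof to compare against; the relevant benchmark is the Tao--Vu argument itself.

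Measured against that, your sketch correctly captures the architecture: the Lindeberg telescoping over the $O(n^2)$ independent entries, a fifth-order Taylor expansion of $F$ in the swapped entry with fourth-moment matching killing the expected low-order terms, Hadamard-type formulae for eigenvalue derivatives, truncation to an overwhelming-probability event, and eigenvector delocalization together with level repulsion (the gap property) as the technical core. Two caveats. First, your stated reason for why second-order matching suffices on the diagonal---that the diagonal perturbation is rank one rather than rank two---is not the operative one; it is simply that there are only $n$ diagonal swaps versus $O(n^2)$ off-diagonal ones, so a third-order remainder of size $n^{O(c_0)}n^{-3/2}$ per diagonal swap already sums to $o(n^{-c_0})$. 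Second, the genuinely difficult steps---establishing the gap and delocalization estimates with failure probability $O(n^{-A})$ uniformly over up to $n^{c_0}$ ordered indices under ${\bf(C1)}$ rather than the sub-exponential ${\bf(C0)}$, and ensuring the Taylor expansion remains legitimate across eigenvalue near-crossings---are acknowledged but not carried out; these occupy most of \cite{Tao/Vu:2009} and \cite{Tao/Vu:2010} and cannot be taken as given.
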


The preliminary version of this Theorem was first established in the case of bulk eigenvalues and assuming Condition ${\bf(C0)}$, \cite{Tao/Vu:2009}.
Later the restriction to the bulk was removed and the Condition ${\bf(C0)}$ was relaxed to Condition ${\bf(C1)}$
for a sufficiently  large value of $C_0$, \cite{Tao/Vu:2010}. Moreover, a natural question is whether the requirement of four matching moments is necessary.
As far as the distribution of individual eigenvalues $\lambda_i(A_n)$ are concerned, the answer is essentially yes. For this see
the discussions in \cite{TaoVu:2012}.

Applying this Theorem for the special case when $M_n'$ is GUE, we obtain the following MDP:

\begin{theorem} \label{result3}
The MDP for $\bigl( \frac{1}{a_n} Z_{n,i} \bigr)_n$, Theorem \ref{result2}, hold for Wigner Hermitian matrices obeying Condition ${\bf(C1)}$ for a sufficiently large $C_0$, and
whose atom distributions match that of GUE to second order on the diagonal and fourth order off the diagonal. Given $i=i(n)$ such that
$i \to \infty$ and $i/n \to 0$ as $n \to \infty$ we have:  The sequence $\bigl( \frac{1}{a_n} Z_{n,i} \bigr)_n$ with 
\begin{equation} \label{localrv2}
Z_{n,i} := \frac{\lambda_{n-i}(W_n) - \bigl( 2 - \bigl( \frac{3 \pi}{2} \frac in \bigr)^{2/3} \bigr)}{ \operatorname{const} \bigl( \frac{ \log i}{i^{2/3} n^{4/3}} \bigr)^{1/2}} 
\end{equation}
satisfies the MDP for any sequence $(a_n)_n$ of real numbers such that  $1 \ll a_n \ll (\log i)^{1/2}$ with speed $a_n^2$ and rate function $I(x) = \frac{x^2}{2}$.
\end{theorem}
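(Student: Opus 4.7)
The plan is to deduce Theorem \ref{result3} from the GUE version (Theorem \ref{result2}) via the Four Moment Theorem (Theorem \ref{taovu}), following the same smooth-indicator approximation strategy that underlies the universality arguments in \cite{Dallaporta/Vu:2011} and \cite{DoeringEichelsbacher:2011}. The MDP is equivalent to proving
$$
\lim_{n \to \infty} \frac{1}{a_n^2} \log P(Z_{n,i}/a_n \leq x) = -\frac{x^2}{2} \quad (x<0), \qquad \lim_{n \to \infty} \frac{1}{a_n^2} \log P(Z_{n,i}/a_n \geq x) = -\frac{x^2}{2} \quad (x>0),
$$
by the weak-MDP plus exponential-tightness argument already used just above Theorem \ref{result2}; hence it suffices to transfer these one-sided tail asymptotics from the GUE $W_n'$ to a matching Wigner matrix $W_n$.

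First I would fix $x$ and translate the event $\{Z_{n,i}/a_n \leq x\}$ into $\{\lambda_{n-i}(A_n) \leq t_n\}$ with $t_n := n\, y_n(a_n,x)$ for $y_n(a_n,x)$ as in \eqref{ynan}. Choose a smoothing scale $\eps_n := n^{-c_0/5}$ and a smooth cutoff $G : \R \to [0,1]$ with $G=1$ on $(-\infty,t_n]$, $G=0$ on $[t_n + \eps_n, \infty)$, and $|G^{(j)}| \leq \eps_n^{-j} \leq n^{c_0}$ for $0 \leq j \leq 5$. This meets the derivative hypothesis of Theorem \ref{taovu} (with $k=1$), so the theorem yields
$$
\bigl| \E[G(\lambda_{n-i}(A_n))] - \E[G(\lambda_{n-i}(A_n'))] \bigr| \leq n^{-c_0},
$$
where $A_n'$ is the GUE scaling. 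The sandwich $P(\lambda_{n-i}(A_n) \leq t_n) \leq \E[G(\lambda_{n-i}(A_n))] \leq P(\lambda_{n-i}(A_n) \leq t_n + \eps_n)$, applied to both $A_n$ and $A_n'$, then gives
$$
P\bigl(\lambda_{n-i}(W_n) \leq y_n(a_n,x)\bigr) \leq P\bigl(\lambda_{n-i}(W_n') \leq y_n(a_n,x) + \eps_n/n\bigr) + n^{-c_0},
$$
together with the reverse inequality obtained by shifting $t_n$ down by $\eps_n$.

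Next I would absorb the two error terms. The eigenvalue shift $\eps_n/n = n^{-1-c_0/5}$, translated back to the $Z_{n,i}/a_n$ scale, produces an $x$-shift of order $n^{-1-c_0/5}/(a_n\,c_n)$ where $c_n \asymp (\log i)^{1/2}/(i^{1/3} n^{2/3})$; since $i \leq n$ this shift is $o(1)$ and can be absorbed into a slightly perturbed $x\pm o(1)$, which is harmless because the rate $x^2/2$ is continuous. The additive error $n^{-c_0}$ must be compared to the GUE MDP probability $P(Z_{n,i}'/a_n \leq x) = \exp\bigl(-\tfrac{x^2}{2} a_n^2 (1+o(1))\bigr)$. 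Because $a_n^2 \ll \log i \leq \log n$, we have $a_n^2 x^2/2 = o(\log n)$, so $e^{-a_n^2 x^2/2} \gg n^{-c_0}$, and the additive FMT error is negligible on the logarithmic scale $a_n^2$.

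Combining these two reductions with Theorem \ref{result2} gives the desired upper and lower limits for $(1/a_n^2) \log P(Z_{n,i}/a_n \leq x)$, $(1/a_n^2) \log P(Z_{n,i}/a_n \geq x)$. Exponential tightness of $(Z_{n,i}/a_n)_n$ for the Wigner matrix follows from the resulting tail bound (exactly as in the derivation of \eqref{soso}), and Lemma 1.2.18 in \cite{Dembo/Zeitouni:LargeDeviations} upgrades the weak MDP to the full MDP with good rate function $x^2/2$. The main obstacle is the joint calibration of three scales: the smoothing width $\eps_n$ is forced to be $\geq n^{-c_0/5}$ by the FMT derivative cap $n^{c_0}$; the induced shift $\eps_n/n$ must be translated to a negligible $x$-perturbation; and the additive comparison error $n^{-c_0}$ must remain exponentially small at speed $a_n^2$. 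All three demands are met precisely because the MDP regime $a_n \ll (\log i)^{1/2}$ forces $a_n^2 = o(\log n)$, which is the key structural observation that makes the transfer work.
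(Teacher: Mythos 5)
Your proposal is correct and follows essentially the same route as the paper: compare the Wigner matrix to the GUE comparison matrix via the Four Moment Theorem using a smooth bump function whose derivative bounds are calibrated to $n^{c_0}$, translate the resulting $n^{-O(c_0)}$ shift in the eigenvalue back to an $o(1)$ perturbation of $x$ on the $Z_{n,i}/a_n$ scale, absorb the additive $n^{-c_0}$ error using $a_n^2 = o(\log n)$, invoke Theorem~\ref{result2}, and close with the weak-MDP plus exponential-tightness argument. The only cosmetic difference is that you work directly with half-lines whereas the paper cites \cite[(18)]{Tao/Vu:2009} for finite intervals $[b,c]$ (with shift $n^{-c_0/10}$ rather than your $n^{-c_0/5}$) and then invokes \cite[Lemma 1.2.15]{Dembo/Zeitouni:LargeDeviations}; neither variation affects the substance.
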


\begin{proof} 
Let $M_n$ be a Wigner Hermitian matrix whose entries satisfy Condition ${\bf(C1)}$ and match the corresponding entries of GUE up to order 4.
Let $i$  be as in the statement of the Theorem, and let $c_0$ be as in Theorem \ref{taovu}. Then \cite[(18)]{Tao/Vu:2009}
says that
\begin{equation} \label{inequ}
P \bigl( \lambda_i(A_n') \in I_{-} \bigr) - n^{-c_0} \leq P \bigl( \lambda_i(A_n) \in I \bigr) \leq P \bigl( \lambda_i(A_n') \in I_{+} \bigr) + n^{-c_0}
\end{equation}
for all intervals $I=[b,c]$, and $n$ sufficiently, where $I_{+} := [b-n^{-c_0/10}, c+n^{-c_0/10}]$
and $I_{-} := [b+n^{-c_0/10}, c-n^{-c_0/10}]$. We present the argument of proof of \eqref{inequ} just to make the presentation more self-contained.
One can find a smooth bump function $G : {\mathbb R} \to {\mathbb R}_+$ which is equal to one on the smaller interval $I$ and vanishes outside
the larger interval $I_+$. It follows that $P \bigl( \lambda_i(A_n) \in I \bigr) \leq \E G(\lambda_i(A_n))$ and 
$\E G(\lambda_i(A_n')) \leq P \bigl( \lambda_i(A_n') \in I_+\bigr)$. One can choose $G$ to obey the condition $|\nabla^j G(x)| \leq n^{c_0}$ for $j=0, \ldots, 5$
and hence by Theorem \ref{taovu} one gets
$$
| \E G(\lambda_i(A_n)) - \E G(\lambda_i(A_n'))| \leq n^{-c_0}.
$$
Therefore the second inequality in \eqref{inequ} follows from the triangle inequality. The first inequality is proven similarly using a bump function which is 1 on $I_-$ and vanishes outside $I$.

Now for $n$ sufficiently large we consider the interval $I_n := [b_n, c_n]$ with
$$
b_n :=  b \, a_n \, n \, \operatorname{const} \bigl( \frac{ \log i}{i^{2/3} n^{4/3}} \bigr)^{1/2} + n \bigl( 2 - \bigl( \frac{3 \pi}{2} \frac in \bigr)^{2/3} \bigr),
$$
$$
c_n :=  c \, a_n \, n \, \operatorname{const} \bigl( \frac{ \log i}{i^{2/3} n^{4/3}} \bigr)^{1/2} + n \bigl( 2 - \bigl( \frac{3 \pi}{2} \frac in \bigr)^{2/3} \bigr)
$$
with $b,c \in {\mathbb R}$,  $b \leq c$ and $\operatorname{const}= \bigl( (3 \pi)^{2/3} 2^{1/3} \bigr)^{-1/2}$.
Then for $\frac{1}{a_n} Z_{n,i}$ defined as in the statement of the Theorem we have
$P \bigl( Z_{n,i}/a_n  \in [b,c] \bigr) = P \bigl( \lambda_{n-i}(A_n) \in I_n \bigr)$.
With \eqref{inequ} and \cite[Lemma 1.2.15]{Dembo/Zeitouni:LargeDeviations} we obtain
$$
\limsup_{n \to \infty} \frac{1}{a_n^2} \log P \bigl( Z_{n,i}/a_n  \in [b,c] \bigr) \leq \max \biggl( \limsup_{n \to \infty} \frac{1}{a_n^2} \log 
P \bigl( \lambda_{n-i}(A_n') \in (I_n)_+ \bigr) ; \limsup_{n \to \infty} \frac{1}{a_n^2} \log n^{-c_0} \biggr).
$$
For the first object we have
$$
\P \bigl( \lambda_{n-i}(A_n') \in (I_n)_+ \bigr)  =  P \biggl( 
\frac{\lambda_{n-i}(A_n') - n  \bigl( 2 - \bigl( \frac{3 \pi}{2} \frac in \bigr)^{2/3} \bigr)}{a_n \, n \, \operatorname{const} \bigl( \frac{ \log i}{i^{2/3} n^{4/3}} \bigr)^{1/2}} \in [b - \eta(n), c + \eta(n)] \biggr)
$$
with $\eta(n) = n^{-c_0/10} \bigl( a_n \, n \, \operatorname{const} \bigl( \frac{ \log i}{i^{2/3} n^{4/3}} \bigr)^{1/2} \bigr)^{-1} \to 0$ as $n \to \infty$.
Since $c_0 >0$ and $ \log n / a_n^2 \to \infty$ for $n \to \infty$ by assumption, applying Theorem \ref{result2} we have
$$
\limsup_{n \to \infty} \frac{1}{a_n^2} \log P \bigl( Z_{n,i}/a_n  \in [b,c] \bigr) \leq - \inf_{x \in [b,c]} \frac{x^2}{2}.
$$
Applying the first inequality in \eqref{inequ} in the same manner we also obtain the upper bound
$$
\limsup_{n \to \infty} \frac{1}{a_n^2} \log P \bigl( Z_{n,i}/a_n  \in [b,c] \bigr) \geq - \inf_{x \in [b,c]} \frac{x^2}{2}.
$$
Finally the argument in the last part of the proof of Theorem \ref{result2} can be repeated to obtain the MDP for $(Z_{n,i}/a_n)_n$.
\end{proof}
\bigskip

\section{Universal global moderate deviations near the edge}

Next we show the MDP for the eigenvalue counting function near the edge of the spectrum
for Wigner Hermitian matrices matching moments with GUE up to order four:

\begin{theorem} \label{result4}
The MDP for $(Z_n)_n$, Theorem \ref{result1}, hold for Wigner Hermitian matrices $M_n$ obeying Condition ${\bf(C1)}$ for a sufficiently large $C_0$, and
whose atom distributions match that of GUE to second order on the diagonal and fourth order off the diagonal. 
Let $W_n = \frac{1}{\sqrt{n}} M_n$,  let $I_n = [y_n, \infty)$ where $y_n \to 2^-$ for $n \to \infty$. Assume that $y_n \in[-2 + \delta, 2)$ and $n(2 -y_n)^{3/2} \to \infty$ when $n \to \infty$. Then the sequence
$$
Z_n = \frac{N_{I_n}(W_n) - \frac{2}{3 \pi} n(2 -y_n)^{3/2}}{a_n \, \sqrt{\frac{1}{2 \pi^2} \log (n(2-y_n)^{3/2})}}
$$
satisfies the MDP with speed $a_n^2$, rate function $x^2/2$ and in the regime $1 \ll a_n \ll \sqrt{\log (n(2-y_n)^{3/2})}$.
\end{theorem}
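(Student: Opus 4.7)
The plan is to invert the deduction that produced Theorem \ref{result2} from Theorem \ref{result1}, but now at the universal level: since the universal MDP for single edge eigenvalues is already in hand as Theorem \ref{result3}, I will transport it via the elementary equivalence \eqref{relation} to an MDP for the counting function. In particular, no further appeal to the Tao--Vu Four Moment Theorem beyond what is already embedded in Theorem \ref{result3} will be needed.

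The first step is to translate the event $\{Z_n \le t\}$ into an event about a single eigenvalue. Setting $\mu_n := \frac{2}{3\pi}n(2-y_n)^{3/2}$, $\sigma_n := \sqrt{\frac{1}{2\pi^2}\log(n(2-y_n)^{3/2})}$ and $i_n(t) := \lfloor \mu_n + t a_n\sigma_n\rfloor$, the hypotheses $y_n\to 2^-$ and $n(2-y_n)^{3/2}\to\infty$ ensure that $i_n(t)\to\infty$ with $i_n(t)/n\to 0$, while $a_n\ll\sqrt{\log(n(2-y_n)^{3/2})}$ translates into $a_n\ll(\log i_n(t))^{1/2}$, matching the scale admitted by Theorem \ref{result3}. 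The identity \eqref{relation} applied to $W_n$ then gives
$$
P(Z_n\le t) \,=\, P\bigl(N_{I_n}(W_n)\le i_n(t)\bigr) \,=\, P\bigl(\lambda_{n-i_n(t)}(W_n)\le y_n\bigr),
$$
and analogously $P(Z_n\ge t) = P(\lambda_{n-i_n(t)+1}(W_n)>y_n)$ for the upper tail.

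The second step expresses this as a probability for the standardised variable $Z_{n,i_n(t)}$ appearing in Theorem \ref{result3}. A Taylor expansion of $2-(3\pi i/(2n))^{2/3}$ about $i=\mu_n$, carried out in direct analogy with \eqref{ynan}--\eqref{tay1} and using that $i_n(t)/\mu_n = 1 + O(\log(n(2-y_n)^{3/2})/(n(2-y_n)^{3/2}))$, should yield
$$
\frac{y_n - \bigl(2-(3\pi i_n(t)/(2n))^{2/3}\bigr)}{\operatorname{const}\bigl((\log i_n(t))/(i_n(t)^{2/3}n^{4/3})\bigr)^{1/2}} \,=\, a_n t + o(a_n),
$$
with $\operatorname{const}=((3\pi)^{2/3}2^{1/3})^{-1/2}$ and the $o(a_n)$ uniform in $t$ on compact sets. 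Thus $\{\lambda_{n-i_n(t)}(W_n)\le y_n\}$ coincides with $\{Z_{n,i_n(t)}/a_n\le t+o(1)\}$. Fixing $t<0$ and sandwiching $t+o(1)$ between $t\pm\varepsilon$, the MDP for $(Z_{n,i_n(t)}/a_n)_n$ furnished by Theorem \ref{result3}, followed by $\varepsilon\downarrow 0$, gives
$$
\lim_{n\to\infty}\frac{1}{a_n^2}\log P(Z_n\le t) \,=\, -\frac{t^2}{2},
$$
and the symmetric computation on the upper tail yields $\lim_n a_n^{-2}\log P(Z_n\ge t) = -t^2/2$ for $t>0$.

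To conclude I follow the template used at the end of the derivation of Theorem \ref{result2}: the logarithmic limits just obtained, taken over the family of open intervals not containing $0$, verify the hypothesis of \cite[Theorem 4.1.11]{Dembo/Zeitouni:LargeDeviations} and yield a weak MDP for $(Z_n)_n$ at speed $a_n^2$ with rate $x^2/2$; exponential tightness, inherited from the same tail bounds as $|t|\to\infty$, then upgrades this to the full MDP via \cite[Lemma 1.2.18]{Dembo/Zeitouni:LargeDeviations}. The main technical obstacle is the uniformity in $t$ of the Taylor expansion displayed above, compounded by the fact that the index sequence $i_n(t)$ itself varies with the parameter $t$; both points are handled by the $\pm\varepsilon$ sandwich and the uniform ratio estimate $i_n(t)/\mu_n\to 1$, but they require some care because the normalisation appearing in Theorem \ref{result3} depends non-trivially on $i$.
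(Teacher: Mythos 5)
Your route is correct but genuinely different from the paper's. The paper proves Theorem \ref{result4} directly from the Four Moment comparison inequality \eqref{inequ} together with the GUE counting-function MDP of Theorem \ref{result1}: it converts $\{Z_n \le \xi\}$ into an eigenvalue event for the Wigner matrix via \eqref{relation}, swaps that Wigner eigenvalue for a GUE eigenvalue at the slightly shifted endpoint $y_n' = y_n + n^{-1-c_0/10}$, converts back to a counting-function event for the GUE at $J_n=[y_n',\infty)$, verifies that $y_n'$ still satisfies the hypotheses of Theorem \ref{result1}, and checks $\xi_n = \xi + o(1)$. You instead convert $\{Z_n\le t\}$ into an eigenvalue event for the Wigner matrix itself and feed it into the already-universal Theorem \ref{result3} with the $n$- and $t$-dependent index $i_n(t) = \lfloor\mu_n + t a_n\sigma_n\rfloor$, treating the Four Moment machinery as a black box digested once and for all inside Theorem \ref{result3}. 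Conceptually the two proofs are the two sides of a commuting square: the paper transfers universality at the counting-function level (GUE counting $\to$ Wigner counting), while you transfer it at the eigenvalue level and then translate back via \eqref{relation}. Your Taylor step does go through --- the constant $((3\pi)^{2/3}2^{1/3})^{-1/2}$ is exactly what makes $\tfrac{2}{3}(2-y_n)\,t a_n\sigma_n/\mu_n$ divided by the denominator collapse to $a_n t$; the $O(1)$ floor error and the $O(u^2)$ remainder in $(1+u)^{2/3}$ contribute $O(1/\sigma_n)$ and $O(a_n^2\sigma_n/\mu_n)$, both $o(1)$ in the regime $a_n\ll\sigma_n$; and replacing $\log i_n(t)$ and $i_n(t)^{2/3}$ by $\log\mu_n$ and $\mu_n^{2/3}$ changes the denominator by only a $1+o(1)$ factor --- but you should carry the expansion through explicitly rather than asserting it ``should yield''. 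Your approach is slightly more economical (no fresh appeal to \eqref{inequ}, no verification that $y_n'\to 2^-$ and $n(2-y_n')^{3/2}\to\infty$); the paper's is more self-contained and sidesteps the mild awkwardness of invoking Theorem \ref{result3} for a different index sequence $i_n(t)$ at each value of $t$, which you correctly neutralize with the fixed-$t$, $\pm\varepsilon$ sandwich before taking the family of open intervals into \cite[Theorem 4.1.11]{Dembo/Zeitouni:LargeDeviations}.
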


\begin{proof}
For every $\xi \in {\mathbb R}$ and $k_n$ defined by 
$$
k_n := \xi \, a_n \, \sqrt{\frac{1}{2 \pi^2} \log (n(2-y_n)^{3/2})} + \frac{2}{3 \pi} n(2 -y_n)^{3/2}
$$
we obtain that
$P \bigl( Z_n \leq \xi \bigr) = P \bigl( N_{I_n}(W_n) \leq k_n \bigr)$.
 Hence using \eqref{relation} it follows
$$
P \bigl( Z_n \leq \xi \bigr) = P \bigl( \lambda_{n- k_n}(W_n) \leq y_n \bigr) = P \bigl( \lambda_{n- k_n}(A_n) \leq n \, y_n \bigr).
$$
With  \eqref{inequ} we have
$
P \bigl( \lambda_{n- k_n}(A_n) \leq n \, y_n \bigr) \leq P \bigl( \lambda_{n- k_n}(A_n') \leq n \, y_n + n^{-c_0/10} \bigr) + n^{-c_0}
$
and
$$
P \bigl( \lambda_{n- k_n}(A_n') \leq n \, y_n + n^{-c_0/10} \bigr) = P \bigl( \lambda_{n- k_n}(W_n') \leq y_n + n^{-1-c_0/10} \bigr) = 
P \bigl( N_{J_n}(W_n') \leq k_n \bigr),
$$
where $J_n = [y_n + n^{-1-c_0/10}, \infty)$. With $y_n' := y_n + n^{-1-c_0/10}$ we consider
$$
Z_n' = \frac{N_{J_n}(W_n') - \frac{2}{3 \pi} n(2 -y_n')^{3/2}}{a_n \, \sqrt{\frac{1}{2 \pi^2} \log (n(2-y_n')^{3/2})}}.
$$
In order to apply Theorem \ref{result1} for $(Z_n')_n$, we have to check if $y_n' \to 2^-$ and $n(2 -y_n')^{3/2} \to \infty$ when
$n \to \infty$. For a proof see \cite[Section 2]{Dalla:2011}. We present the arguments just to make the presentation more self-contained. By assumption
we take $y_n \in [-2+ \delta, 2)$ with $y_n \to 2^-$. Suppose that $y_n' > 2$ for some $n$, then $y_n-2 + n^{-1 -c_0/10} > 0$, hence $2-y_n < n^{-1 -c_0/10}$,
which implies $n(2-y_n)^{3/2}  < n \, n^{-3/2 - 3c_0/20}$, but the left hand side is growing by assumption, a contradiction. We have proven $y_n' \to 2^-$.
Moreover we have
$$
(2- y_n')^{3/2} = (2-y_n)^{3/2} \biggl( 1 - \frac{n^{-1 -c_0/10}}{2-y_n} \biggr)^{3/2} = (2-y_n)^{3/2} \biggl( 1 - \frac 32 \frac{n^{-1 -c_0/10}}{2-y_n} +o 
\biggl( \frac{n^{-1 -c_0/10}}{2-y_n}\biggr) \biggr).
$$
Notice that $\frac{n^{-1 -c_0/10}}{2-y_n}= \frac{n^{-1/3 -c_0/10}}{(n(2-y_n)^{3/2})^{2/3}} \to 0$ and $ n(2 -y_n)^{3/2} \to \infty$ when $n \to \infty$ by assumption.
Hence we can apply Theorem \ref{result1}, which is the MDP for $(Z_n')_n$. Summarizing we have
$$
P \bigl( Z_n \leq \xi \bigr) \leq P \bigl( Z_n' \leq \xi_n \bigr) + n^{-c_0}
$$
with 
\begin{eqnarray*}
\xi_n & = & \frac{k_n - \frac{2}{3 \pi} n (2-y_n')^{3/2}}{a_n \sqrt{\frac{1}{2 \pi^2} \log (n(2-y_n')^{3/2})}} \\
& = & \frac{ \frac{2}{3 \pi} n \bigl( (2-y_n)^{3/2} - (2- y_n')^{3/2} \bigr)}{a_n \, \sqrt{\frac{1}{2 \pi^2} \log (n(2-y_n')^{3/2})}} + 
\xi \biggl( \frac{ \log (n(2-y_n)^{3/2})}{\log (n(2-y_n')^{3/2})} \biggr)^{1/2}.
\end{eqnarray*}
We will prove that $\xi_n = \xi + o(1)$. Using the preceding representation we have
$$
 n \bigl( (2-y_n)^{3/2} - (2- y_n')^{3/2} \bigr) = \frac 32 n^{-c_0/10}(2-y_n)^{1/2} + o(n^{-c_0/10}) \to 0
 $$
 and $a_n \, \sqrt{\frac{1}{2 \pi^2} \log (n(2-y_n')^{3/2})} \to \infty$ when $n \to \infty$. Moreover
 $$
\frac{ \log (n(2-y_n)^{3/2})}{\log (n(2-y_n')^{3/2})} = \frac{ \log (n(2-y_n)^{3/2})}{\log (n(2-y_n)^{3/2}) + \frac 32 \log \bigl( 1 - \frac{n^{-1 -c_0/10}}{2-y_n}\bigr)} \to 1.
$$
Applying Theorem \ref{result1}, it follows that
$
\lim_{n \to \infty} \frac{1}{a_n^2} \log P \bigl( Z_n \leq \xi \bigr) = -\frac{\xi^2}{2}
$
for all $\xi < 0$. Similarly we obtain 
for any $\xi >0$ that $\lim_{n \to \infty} \frac{1}{a_n^2} \log P \bigl( Z_n \geq \xi \bigr) = -\frac{\xi^2}{2}$ and the MDP for $(Z_n)_n$ 
follows along the lines of the proof of Theorem \ref{result1}.
\end{proof}

\begin{remark}
In a next step one could ask whether the statement of Theorem \ref{result4} is true also for the sequence
$$
\frac{N_{I_n}(W_n) - \E [ N_{I_n}(W_n)]}{a_n \, \sqrt{\V (N_{I_n}(W_n))}}.
$$
Hence the question is whether the asymptotic behavior of the expectation and the variance of $N_{I_n}(W_n)$ is identical to the one for GUE
matrices, given in \eqref{edgeasy1} and \eqref{edgeasy2}. The answer is yes, but only for Wigner matrices obeying Condition $(\bf{C0})$.
The reason for is that the Four Moment Theorem \ref{taovu} deals with a finite number of eigenvalues, whereas $N_{I_n}(W_n)$ 
involves all the eigenvalues of the Wigner matrix $M_n$. Theorem \ref{taovu} does not give the asymptotics  \eqref{edgeasy1} and \eqref{edgeasy2}
for Wigner matrices. A recent result of Erd\"os, Yau and Yin  \cite{Erdoes/Yau/Yin:2010} describe strong localization of the eigenvalues of Wigner matrices
and this result provides the additional step necessary to obtain \eqref{edgeasy1} and \eqref{edgeasy2} for Wigner matrices $M_n$ obeying Condition $(\bf{C0})$.
The result in \cite{Erdoes/Yau/Yin:2010} is that for $M_n$ being a Wigner Hermitian matrix  obeying Condition $(\bf{C0})$, there is a constant $C>0$ such that
for any $i \{1, \ldots, n\}$
$$
P \bigl( |\lambda_i(W_n)-t(i/n)| \geq (\log n)^{C \log \log n} \min(i,n-i+1)^{-1/3} n^{-2/3} \bigr) \leq n^{-3}.
$$
Along the lines of the proof of \cite[Lemma 5]{Dallaporta/Vu:2011} one obtains \eqref{edgeasy1} and \eqref{edgeasy2}. We will not present the details.
\end{remark}
\bigskip

\section{Further random matrix ensembles}

In this section, we indicate how the preceding results for Wigner Hermitian matrices can be stated and proved for {\it real Wigner symmetric}
matrices. Real Wigner matrices are random symmetric matrices $M_n$ of size $n$ such that, for $i<j$, $(M_n)_{ij}$ are i.i.d. with mean zero and 
variance one, $(M_n)_{ii}$ are i.i.d. with mean zero and variance 2. The case where the entries are Gaussian is the GOE mentioned in the
introduction.  As in the Hermitian case, the main issue is to establish our conclusions for the GOE. 
On the level of CLT, this was developed in \cite{Rourke:2010} by means of the famous {\it interlacing formulas}
due to Forrester and Rains, \cite{Forrester/Rains:2001}, that relates the eigenvalues of different matrix ensembles.
\begin{theorem}[Forrester, Rains, 2001] \label{for}
The following relation holds between GUE and GOE matrix ensembles:
\begin{equation} \label{forrai}
{\rm GUE}_n = {\rm even} \bigl( \rm{GOE}_n \cup {\rm GOE}_{n+1} \bigr).
\end{equation} 
\end{theorem}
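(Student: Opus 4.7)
The plan is to verify the claimed equality in distribution on the level of joint ordered eigenvalue densities, since both sides are defined through their spectral data. First I would recall the explicit joint density of the ordered eigenvalues for the $\beta=1$ (GOE) and $\beta=2$ (GUE) ensembles: for $\mathrm{GOE}_m$ the density of $\lambda_1<\cdots<\lambda_m$ on the Weyl chamber is proportional to $\prod_{i<j}|\lambda_j-\lambda_i|\,e^{-\sum \lambda_i^2/4}$, while for $\mathrm{GUE}_n$ the density of $\mu_1<\cdots<\mu_n$ is proportional to $\prod_{i<j}(\mu_j-\mu_i)^2\,e^{-\sum \mu_i^2/2}$. The statement then says that if $x=(x_1<\cdots<x_n)$ comes from $\mathrm{GOE}_n$ independently of $y=(y_1<\cdots<y_{n+1})$ from $\mathrm{GOE}_{n+1}$, then after sorting the $2n+1$ combined values the $n$ entries at even positions have exactly the $\mathrm{GUE}_n$ density.

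Next I would change variables by relabeling the sorted union: let $\mu_1<\cdots<\mu_n$ denote the even positions and $\nu_1<\cdots<\nu_{n+1}$ the odd positions, so that $\nu_i\le \mu_i\le \nu_{i+1}$ automatically. On the chamber of $\mu$ and $\nu$ exactly one of the two label assignments ($x=\mu,\ y=\nu$ or $x=\nu,\ y=\mu$) is compatible with the parities of $n$ and $n+1$, and after summing over that combinatorial choice the joint density of $(\mu,\nu)$ becomes proportional to
\begin{equation*}
\prod_{i<j}|\mu_j-\mu_i|\;\prod_{i<j}|\nu_j-\nu_i|\;\exp\!\Big(-\tfrac{1}{4}\sum \mu_i^2-\tfrac{1}{4}\sum \nu_j^2\Big)
\end{equation*}
on the interlacing region. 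The goal is then to marginalize out $\nu$ and recover the $\mathrm{GUE}_n$ density in $\mu$.

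The main obstacle — and the heart of the matter — is the evaluation of the interlacing integral
\begin{equation*}
\int\limits_{\nu_1\le\mu_1\le\nu_2\le\cdots\le\mu_n\le\nu_{n+1}}\prod_{i<j}|\nu_j-\nu_i|\,e^{-\sum \nu_j^2/4}\,d\nu_1\cdots d\nu_{n+1}\;=\;\kappa_n\,\prod_{i<j}(\mu_j-\mu_i)\,e^{-\sum \mu_i^2/4}
\end{equation*}
for an explicit constant $\kappa_n$. This is the identity that converts a $\beta=1$ Vandermonde into one factor of a $\beta=2$ Vandermonde after integrating over interlacers against Gaussian weights, and it is naturally proved by a de Bruijn/Pfaffian identity: write the left-hand side as a Pfaffian of a matrix of one-variable integrals $\int e^{-\nu^2/4}\,\nu^k\,\mathbf{1}_{[\mu_i,\mu_{i+1}]}(\nu)\,d\nu$ and recognize the resulting Pfaffian, after row operations, as a Vandermonde determinant. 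Combining this with the $\prod|\mu_j-\mu_i|$ already present converts the power from $1$ to $2$, and the Gaussian weights combine from $e^{-\sum\mu_i^2/4}$ to $e^{-\sum\mu_i^2/2}$, reproducing the $\mathrm{GUE}_n$ density up to a constant.

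Finally I would check normalization by integrating both sides over the Weyl chamber: the constant $\kappa_n$ and the ratio of the $\mathrm{GOE}_n,\mathrm{GOE}_{n+1}$ normalization constants to that of $\mathrm{GUE}_n$ are classical Selberg-type integrals, and verifying that they balance is routine once the interlacing identity is in hand. The Pfaffian evaluation is the only nontrivial input; everything else is bookkeeping.
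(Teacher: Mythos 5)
The paper does not prove Theorem~\ref{for}: it is quoted as a citation to Forrester and Rains, and what follows in the text is merely a verbal paraphrase of what the identity \eqref{forrai} means. So there is no proof in the paper to compare against, and your proposal has to stand on its own.

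Your overall strategy (compare joint eigenvalue densities, pass to the sorted union, integrate out the unwanted variables via a de Bruijn/Pfaffian identity, and check normalization constants) is the right one, and your interlacing integral
$\int_{\nu_1\le\mu_1\le\cdots\le\nu_{n+1}}\prod_{i<j}|\nu_j-\nu_i|\,e^{-\sum\nu_j^2/4}\,d\nu \propto \prod_{i<j}(\mu_j-\mu_i)\,e^{-\sum\mu_i^2/4}$
is indeed correct (one can check $n=1$ by hand). However, there is a genuine gap one step earlier, and it is the crux of the whole theorem. You claim that on the $(\mu,\nu)$ chamber \emph{exactly one} label assignment ($x=\mu$, $y=\nu$) is compatible, so the joint density of $(\mu,\nu)$ is the single term $\Delta(\mu)\Delta(\nu)e^{-\frac14(\sum\mu_i^2+\sum\nu_j^2)}$. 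This is false. Eigenvalues of two \emph{independent} GOE matrices do not interlace almost surely; with positive probability two ${\rm GOE}_n$ eigenvalues are adjacent in the sorted union. The density of the sorted sequence $z_1<\cdots<z_{2n+1}$ is obtained by summing over \emph{all} $\binom{2n+1}{n}$ subsets $S\subset\{1,\dots,2n+1\}$ of size $n$ (those assigned to ${\rm GOE}_n$), yielding $\sum_{S}\Delta(z_S)\Delta(z_{S^c})\,e^{-\frac14\sum z_k^2}$. The statement you want --- that this equals a constant times $\Delta(z_{\rm even})\Delta(z_{\rm odd})\,e^{-\frac14\sum z_k^2}$ --- is a nontrivial combinatorial identity (essentially Gunson's superposition lemma, which is precisely the input Forrester and Rains invoke). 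Without it your argument does not get off the ground: you have simply asserted the conclusion of the hard combinatorial step. Once that identity is supplied, the rest of your outline (reparametrize by even/odd positions, apply the de Bruijn integral, match Selberg normalization constants) does go through and reproduces the ${\rm GUE}_n$ density.
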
  
The statement is: Take two independent (!) matrices from the GOE: one of size $n \times n$ and one of size $(n+1) \times (n+1)$. Superimpose
the $2n+1$ eigenvalues on the real line and then take the $n$ even ones. They have the same distribution as the eigenvalues of a $n \times n$
matrix from the GUE. Let  $M_n^{\R}$ denote a GOE matrix and let $W_n^{\R} := \frac{1}{\sqrt{n}} M_n^{\R}$. In \cite[Theorem 4.2]{DoeringEichelsbacher:2011} we have proved a MDP for 
\begin{equation} \label{GOEZn}
Z_n^{\R} := \frac{N_{I_n}(W_n^{\R}) - \E[N_{I_n}(W_n^{\R})]}{a_n \sqrt{\V(N_{I_n}(W_n^{\R}))}}
\end{equation}
for any $1 \ll a_n \ll \sqrt{\V(N_{I_n}(W_n^{\R}))}$, $I_n$ an interval in ${\mathbb R}$, with speed $a_n^2$ and rate $x^2/2$. 
If $M_n^{\C}$ denotes a GUE matrix and $W_n^{\C}$ the corresponding normalized matrix,  the nice consequences of \eqref{forrai} were already suitably developed in \cite{Rourke:2010}: applying Cauchy's interlacing theorem one can write
\begin{equation} \label{interl}
N_{I_n}(W_n^{\C}) = \frac 12 \bigl[  N_{I_n}(W_n^{\R}) +  N_{I_n}(\widehat{W}_n^{\R}) + \eta_n'(I_n) \bigr],
\end{equation}
where one obtains ${\rm GOE}_n'$ in $N_{I_n}(\widehat{W}_n^{\R})$ from ${\rm GOE}_{n+1}$ by considering the principle sub-matrix of ${\rm GOE}_{n+1}$
and $\eta_n'(I_n)$ takes values in $\{-2,-1,0,1,2\}$. Note that  $N_{I_n}(W_n^{\R})$ and $N_{I_n}(\widehat{W}_n^{\R})$ are independent because ${\rm GOE}_{n+1}$\
and ${\rm GOE}_{n}$ denote independent matrices from the GOE. Now the same arguments as in \cite[Section 4]{DoeringEichelsbacher:2011} 
and Theorem \ref{result1} lead
to the MDP for $(Z_n^{\R})_n$, if we consider intervals $I_n=[y_n, \infty)$ where $y_n \to 2^-$ for $n \to \infty$. Remark that the interlacing formula \eqref{interl} leads to $2 \V(N_{I_n}(W_n^{\C})) +O(1) = \V(N_{I_n}(W_n^{\R}))$ if $\V(N_{I_n}(W_n^{\C})) \to \infty$. Next the proof of Theorem \ref{result2} can be adapted to obtain an MDP for $\lambda_{n-i}(W_n^{\R})$: 
Consider
$$
Z_{n,i}^{\R} := \frac{\lambda_{n-i}(W_n^{\R}) - \bigl( 2 - \bigl( \frac{3 \pi}{2} \frac in \bigr)^{2/3} \bigr)}{ \operatorname{const} \bigl( \frac{ 2 \log i}{i^{2/3} n^{4/3}} \bigr)^{1/2}}.
$$
With $\E[N_{I_n}(W_n^{\R})] = \E[N_{I_n}(W_n^{\C})] + O(1)$ and $2 \V(N_{I_n}(W_n^{\C})) +O(1) = 
\V(N_{I_n}(W_n^{\R}))$ if $\V(N_{I_n}(W_n^{\C})) \to \infty$ we get a MDP along the lines of the proof of Theorem \ref{result2}. We omit the details. 
The Four Moment Theorem also applies for real symmetric matrices. The proof of the next Theorem is nearly identical to the proofs of Theorem
\ref{result3} and Theorem \ref{result4}.  

\begin{theorem} \label{result5}
Consider a real symmetric Wigner matrix $W_n = \frac{1}{\sqrt{n}} M_n$ whose entries satisfy Condition ${\bf (C1)}$ and match the corresponding entries
of GOE up to order 4. Consider $i=i(n)$ such that $i \to \infty$ and $i/n \to 0$ as $n \to \infty$. Denote the $i$th eigenvalue of $W_n$ by 
$\lambda_i(W_n)$. Let $(a_n)_n$ be a sequence of real numbers such that $1 \ll a_n \ll \sqrt{\log i}$.
Then the sequence $(Z_{n,i})_n$ with
$$
Z_{n,i} = \frac{\lambda_{n-i}(W_n) - \bigl( 2 - \bigl( \frac{3 \pi}{2} \frac in \bigr)^{2/3} \bigr)}{ \operatorname{const} \bigl( \frac{ 2 \log i}{i^{2/3} n^{4/3}} \bigr)^{1/2}}
$$
universally satisfies a MDP with speed $a_n^2$ and rate function $I(x)=\frac{x^2}{2}$. Moreover the statement of Theorem \ref{result4} can be adapted and proved analogously.
\end{theorem}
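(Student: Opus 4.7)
The strategy is to follow the Hermitian template: first establish the MDP for $\lambda_{n-i}(W_n^{\R})$ in the GOE at the edge, and then transfer it to general real symmetric Wigner matrices by swapping via the Four Moment Theorem.

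For the GOE step, the interlacing formula \eqref{interl} together with the GUE edge asymptotics \eqref{edgeasy1}--\eqref{edgeasy2} yields, for $y_n \in [-2+\delta,2)$ with $n(2-y_n)^{3/2}\to\infty$,
\[
\E[N_{I_n}(W_n^{\R})] = \tfrac{2}{3\pi}\, n(2-y_n)^{3/2} + O(1),\qquad
\V(N_{I_n}(W_n^{\R})) = \tfrac{1}{\pi^2}\log\bigl(n(2-y_n)^{3/2}\bigr)\bigl(1+o(1)\bigr).
\]
The factor $2$ on the variance (versus the GUE) is precisely what produces $\sqrt{2\log i}$ in the denominator of $Z_{n,i}$ rather than $\sqrt{\log i}$. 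Combining these asymptotics with \cite[Theorem 4.2]{DoeringEichelsbacher:2011} delivers the edge MDP for $N_{I_n}(W_n^{\R})$, i.e.\ the GOE analogue of Theorem \ref{result1}. I would then repeat the calculation surrounding \eqref{ynan}--\eqref{soso}, with $\operatorname{const}$ replaced by $\sqrt{2}\,\operatorname{const}$, to get $\lim a_n^{-2}\log P(Z_{n,i}^{\R}/a_n \leq x) = -x^2/2$ for $x<0$, and analogously for $x>0$; the weak MDP plus exponential tightness argument from the end of the proof of Theorem \ref{result2} then upgrades this to the full MDP in the GOE case.

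For the universality step, the Four Moment Theorem \ref{taovu} applies equally to real symmetric matrices, so the sandwich inequality \eqref{inequ} holds with $A_n' := \sqrt{n}\,W_n^{\R}$ in place of its Hermitian counterpart. Given a bounded interval $[b,c]$, I rescale as in the proof of Theorem \ref{result3}, setting
\[
b_n := b\,a_n\, n\,\operatorname{const}\bigl(\tfrac{2\log i}{i^{2/3}n^{4/3}}\bigr)^{1/2} + n\bigl(2-\bigl(\tfrac{3\pi}{2}\tfrac{i}{n}\bigr)^{2/3}\bigr),
\]
$c_n$ analogously, and $I_n=[b_n,c_n]$, so that $P(Z_{n,i}/a_n\in[b,c])=P(\lambda_{n-i}(A_n)\in I_n)$. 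The endpoint perturbation $\pm n^{-c_0/10}$ coming from \eqref{inequ} translates, after rescaling, into an additive $o(1)$ correction to $b,c$ that does not change $\inf_{x\in[b,c]} x^2/2$, while the additive error $n^{-c_0}$ is exponentially negligible at speed $a_n^2$ because $a_n^2\ll\log i\leq\log n$. Combining this with the GOE MDP from the previous step and \cite[Lemma 1.2.15]{Dembo/Zeitouni:LargeDeviations} gives matching upper and lower MDP bounds on intervals; the weak-MDP/tightness promotion used at the end of the proof of Theorem \ref{result2} then yields the full MDP for $(Z_{n,i}/a_n)_n$.

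The main obstacle is pinning down the GOE edge variance with the correct leading constant: the interlacing \eqref{interl} carries a bounded stochastic correction $\eta_n'(I_n)\in\{-2,\dots,2\}$, and one must verify that it does not disturb the identification of the leading variance constant as $\tfrac{1}{\pi^2}$, which in turn guarantees the rate $x^2/2$. Once this is secured, the two subsequent steps are syntactic adaptations of Theorems \ref{result2} and \ref{result3}. The analogue of Theorem \ref{result4} for real symmetric matrices is obtained along exactly the same lines as in Section 5, replacing the GUE inputs by their GOE counterparts throughout; under Condition $(\mathbf{C0})$ one moreover invokes the Erd\H{o}s--Yau--Yin localization estimate (valid also in the symmetric case) to upgrade the swapping conclusion to the self-normalized statement.
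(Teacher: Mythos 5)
Your proposal is correct and follows essentially the same route as the paper: use the Forrester--Rains/Cauchy interlacing identity~\eqref{interl} to transfer the GUE edge asymptotics of the counting function to the GOE (with the expected value unchanged up to $O(1)$ and the variance doubled, whence $\sqrt{2\log i}$ in place of $\sqrt{\log i}$), invoke the GOE counting-function MDP from \cite[Theorem 4.2]{DoeringEichelsbacher:2011} together with the argument of Theorem~\ref{result2} to get the local MDP for $\lambda_{n-i}(W_n^{\R})$, and then run the Four Moment Theorem sandwich from the proofs of Theorems~\ref{result3}--\ref{result4} verbatim. Your remark that the bounded correction $\eta_n'(I_n)$ only contributes $O(1)$ and therefore does not disturb the leading constant $\tfrac{1}{\pi^2}$ is exactly the point the paper makes by writing the variance relation as $2\V(N_{I_n}(W_n^{\C}))+O(1)=\V(N_{I_n}(W_n^{\R}))$.
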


Remark that one could consider the Gaussian Symplectic Ensemble (GSE). Quaternion self-dual Wigner Hermitian matrices have not been studied. 
Due to Forrester and Rains, the following relation holds between matrix ensembles:
${\rm GSE}_n = {\rm even} \bigl({\rm GOE}_{2n+1} \bigr) \frac{1}{\sqrt{2}}$. The multiplication by $\frac{1}{\sqrt{2}}$ denotes scaling the $(2n+1) \times (2n+1)$ GOE matrix by the factor $\frac{1}{\sqrt{2}}$.
Let $x_1 < x_2 < \cdots < x_n$ denote the ordered eigenvalues of an $n \times n$ matrix from the GSE and let $y_1 <y_2 < \cdots < y_{2n+1}$ denote
the ordered eigenvalues of an $(2n+1) \times (2n+1)$ matrix from the GOE. Then it follows that $x_i = y_{2i}/\sqrt{2}$ in distribution. Hence
the MDP for the $i$-th eigenvalue of the GSE follows from the MDP in the GOE case. We omit formulating the result.




\newcommand{\SortNoop}[1]{}\def\cprime{$'$} \def\cprime{$'$}
  \def\polhk#1{\setbox0=\hbox{#1}{\ooalign{\hidewidth
  \lower1.5ex\hbox{`}\hidewidth\crcr\unhbox0}}}
\providecommand{\bysame}{\leavevmode\hbox to3em{\hrulefill}\thinspace}
\providecommand{\MR}{\relax\ifhmode\unskip\space\fi MR }
\providecommand{\MRhref}[2]{%
  \href{http://www.ams.org/mathscinet-getitem?mr=#1}{#2}
}
\providecommand{\href}[2]{#2}

\end{document}